\theoremstyle{plain}
\newtheorem{Thm}{Theorem}
\newtheorem{Lem}[Thm]{Lemma}
\begin{document}

\title[Kazdan-Warner's  problem on surfaces]
{Two solutions to Kazdan-Warner's problem on surfaces}

\author{Li Ma}
\address{Li MA,  School of Mathematics and Physics\\
  University of Science and Technology Beijing \\
  30 Xueyuan Road, Haidian District
  Beijing, 100083\\
  P.R. China }
\address{ Department of Mathematics \\
Henan Normal university, Xinxiang, 453007 \\
China}
 \email{lma17@ustb.edu.cn}

\thanks{Li Ma's research is partially supported by the National Natural
  Science Foundation of China (No.11771124)}

\begin{abstract}
In this paper, we study the sign-changing Kazdan-Warner's problem on two dimensional closed Riemannian manifold with negative Euler number $\chi(M)<0$. We show that once, the direct method  on convex sets is used to find a minimizer of the corresponding functional, then there is another solution via a use of the variational method of mountain pass. In conclusion, we show that there are at least two solutions to the Kazdan-Warner's problem on two dimensional Kazdan-Warner equation provided the prescribed function changes signs and with this average negative.

{ \textbf{Mathematics Subject Classification 2010}: 53C20, 35J60, 58G99.}

{ \textbf{Keywords}: Kazdan-Warner problem, mountain pass,
direct method on convex sets, multiple solutions.}
\end{abstract}

\maketitle

\section{Introduction}\label{sect0}

The aim of this paper is to study the sign-changing Kazdan-Warner's problem on two dimensional closed Riemannian manifold with $\chi(M)<0$ and we show that there are at least two solutions to \eqref{KW}. This non-uniqueness problem is open since 1974 and the precise result is stated below.

  For a given smooth function $K$ on a compact Riemannian manifold $(M,g)$, Kazdan-Warner \cite{KW} studied the following problem
  \begin{equation}\label{KW}
  -\Delta u+\alpha=K(x) e^{2u}, \ \ in  \ M,
  \end{equation}
  where $\Delta$ is the Laplacian operator of the metric $g$ (and which is $\sum_i\partial_i^2$ on $R^2$) and $\alpha$ is a given real constant. Integrating by part we have that necessarily,
$$
\alpha |M|=\int_M Ke^{2u}dv
$$
where
$|M|=\int_M dv$
 is the volume of $(M,g)$ with $dv$ being the volume element of the metric $g$. Assume that $\alpha<0$. Using the method of super and sub solutions, Kazdan-Warner have proved that there exists a constant $\alpha_0<0$ such that for each $\alpha\in (\alpha_0,0)$ there is a solution to \eqref{KW}.
 The critical number $\alpha_0$ is defined by
 $$
 \alpha_0=\inf \{\alpha; \text{\eqref{KW} ~ is~solvable ~for }~ \alpha\}.
 $$
 Using a very beautiful argument, Kazdan and Warner \cite{KW} have showed that $\alpha_0=\infty$ if and only if $K$ is nontrivial non-positive function on $M$.

For the case when the given smooth function $K$ is positive somewhere with negative average, by the result of Kazdan and Warner mentioned above we have the critical number $\alpha_0>-\infty$. W.Chen and C.Li \cite{CL} have proved that there is a solution $u_0$ to \eqref{KW} at $\alpha=\alpha_0$ and $u_0$ is the $H^1$ limit of stable solution sequence $\{u_k\}$ (corresponding to $\alpha_k\to \alpha_0+$ ). Their argument is based on the solutions obtained by Kazdan-Warner and they have used the variational argument to get their solution. Note that in this case, there is a a positive smooth function $\psi$ such that
$$
-\Delta \psi -2K(x) e^{2u_0}\psi=0, \ \ in  \ M.
$$
The remaining question is whether the solution to \eqref{KW} is unique.

We show that there is no uniqueness of solutions to \eqref{KW} when $\alpha\in (\alpha_0,0)$. We denote by $\chi(M)$ the Euler characteristic of for the surface $M$.
We have two solutions result to \ref{KW} in this case.
\begin{Thm}\label{mali2} On the 2-dimensional compact Riemannian manifold $(M,g)$ with $\chi(M)<0$,
for $\alpha\in (\alpha_0,0)$,
there are at least two solutions to \eqref{KW} provided the given smooth function $K$ is positive somewhere with negative average.
 \end{Thm}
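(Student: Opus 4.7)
I would follow the classical two-solution scheme suggested by the author: first produce $u_1$ by minimizing the natural energy functional on a convex set carved out by sub- and super-solutions, and then produce a second critical point $u_2 \neq u_1$ by applying the mountain pass theorem to the same functional.

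\emph{Step 1 (first solution via direct method on a convex set).} The Euler--Lagrange functional of \eqref{KW} on $H^1(M)$ is
\[
J(u) = \frac{1}{2}\int_M |\nabla u|^2\, dv + \alpha \int_M u\, dv - \frac{1}{2}\int_M K(x)\, e^{2u}\, dv.
\]
For $\alpha \in (\alpha_0, 0)$ Kazdan--Warner's sub/super-solution argument supplies ordered $u_- \leq u_+$ satisfying the corresponding differential inequalities. On the closed convex set $C = \{u \in H^1(M) : u_- \leq u \leq u_+\}$ the exponential $e^{2u}$ is pinched between $e^{2u_-}$ and $e^{2u_+}$, so $J$ is coercive, bounded below, and weakly lower semicontinuous on $C$; the direct method yields a minimizer $u_1 \in C$ which, after analyzing the variational inequality together with standard elliptic regularity, is an interior point of $C$ and hence a classical solution of \eqref{KW}.

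\emph{Step 2 (mountain pass geometry).} Before applying the mountain pass theorem, I must upgrade $u_1$ to a strict local minimum of $J$ in the full $H^1(M)$ topology, not merely relative to $C$. I would use a Brezis--Nirenberg type argument: since $C$ has nonempty $C^0$-interior containing $u_1$, any $H^1$-nearby competitor with smaller energy can, via elliptic regularity applied to the quasi-equation $-\Delta w + \alpha = K e^{2w}$, be shown to lie inside $C$ for $H^1$-distance to $u_1$ small enough, contradicting minimality on $C$. Next I need a point with strictly smaller energy far from $u_1$. Picking a nonnegative $\phi \in C_c^\infty(U)$ on an open set $U \subset \{K > 0\}$ and using the equation for $u_1$, I find
\[
J(u_1 + t\phi) = J(u_1) + t\int_M K e^{2u_1}\phi\, dv + \frac{t^2}{2}\int_M |\nabla \phi|^2\, dv - \frac{1}{2}\int_M K e^{2u_1}\bigl(e^{2t\phi} - 1\bigr)\, dv,
\]
and the last term dominates for large $t$, driving $J(u_1 + t\phi) \to -\infty$ like $-e^{2t\|\phi\|_\infty}$. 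Applying the mountain pass theorem to the class of paths joining $u_1$ to $u_1 + t_0\phi$ then produces a Palais--Smale sequence at level
\[
c = \inf_{\gamma}\max_{s \in [0,1]} J(\gamma(s)) > J(u_1).
\]

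\emph{Step 3 (compactness; main obstacle).} The heart of the proof is verifying the Palais--Smale condition at the level $c$. Given a $(PS)_c$-sequence $\{w_n\}$, I would split $w_n = \bar w_n + v_n$ with $\int_M v_n\, dv = 0$. Testing the approximate equation against the constant function $1$ gives $\int_M K e^{2w_n}\, dv \to \alpha|M|$, which, combined with Moser--Trudinger control of $e^{2v_n}$, fixes the mean $\bar w_n$ and bounds $\|v_n\|_{H^1}$. After extracting a weak limit, standard arguments promote this to strong $H^1$ convergence and identify the limit $u_2$ as a critical point of $J$ with $J(u_2) = c > J(u_1)$, so $u_2 \neq u_1$. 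The chief obstacle is ruling out Moser--Trudinger-type concentration and bubbling in this last step; the strict inequality $\alpha > \alpha_0$ together with the hypothesis $\chi(M) < 0$ (which provides the ambient analytic setting for $\alpha < 0$) are precisely what keeps the exponential nonlinearity subcritical and should prevent loss of compactness.
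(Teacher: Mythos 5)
Your Steps 1 and 2 are essentially the paper's route (minimizer on the order interval between a Kazdan--Warner super-solution and a constant sub-solution, then mountain pass geometry from a bump supported in $\{K>0\}$; your expansion of $J(u_1+t\phi)$ is correct and plays the same role as the paper's $J(tw_0)\to-\infty$). But Step 3, which is where the whole proof actually lives, has a genuine gap. You claim that testing the approximate equation with $\phi=1$, giving $\int_M Ke^{2w_n}\,dv\to\alpha|M|$, ``combined with Moser--Trudinger control of $e^{2v_n}$, fixes the mean $\bar w_n$ and bounds $\|v_n\|_{H^1}$.'' This is circular and, as stated, false: writing $\int_M Ke^{2w_n}\,dv=e^{2\bar w_n}\int_M Ke^{2v_n}\,dv$, the factor $\int_M Ke^{2v_n}\,dv$ has no sign because $K$ changes sign, so boundedness of the product gives no control on $\bar w_n$; and Moser--Trudinger cannot be invoked for $e^{2v_n}$ before you already have an $H^1$ bound on $v_n$, which is exactly what you are trying to prove. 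The real danger is not ``bubbling'' (one is far from any Moser--Trudinger threshold here since $\alpha<0$), but the mean drifting to $+\infty$ with the mass of $e^{2w_n}$ balancing between the regions $\{K>0\}$ and $\{K<0\}$. The paper's Palais--Smale proof supplies precisely the two missing ingredients: (i) a lower bound on the mean, obtained from the energy identity together with the Poincar\'e-type estimate $\bigl|\int_M R(u_k-\bar u_k)\,dv\bigr|\le \tfrac12\int_M|\nabla u_k|^2\,dv+C$ and the crucial sign $\int_M R\,dv=\alpha|M|<0$ (here $R=Ke^{2u_1}$ after translating by $u_1$); and (ii) a local bound for $u_k^+$ on the set $\{R<0\}$, proved by testing with $u_k^+\eta^2$ and using $e^{2t}\ge t^3$, which forces the normalized limit $u_k/|u_k|_2\to\beta$ to be a nonpositive constant and hence contradicts (i) if $|\bar u_k|\to\infty$. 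Nothing in your sketch replaces this mechanism, and the appeal to ``$\alpha>\alpha_0$ and $\chi(M)<0$ keep the nonlinearity subcritical'' is not an argument.

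A secondary, smaller problem is your plan to upgrade $u_1$ to a \emph{strict} local minimum: elliptic regularity cannot be applied to an arbitrary nearby competitor (a competitor solves no equation), so the argument you outline does not show that low-energy neighbors lie in the convex set $C$; moreover strictness may simply fail (there could be a continuum of minimizers). The correct and standard fix, which the paper uses, is a Brezis--Nirenberg/Chang-type statement that a minimizer on the order interval is a local minimizer in $H^1$, combined with a version of the mountain pass theorem (as in Chang's book) that handles a possibly non-strict local minimum and still produces a critical point distinct from $u_1$, even when the minimax level equals $J(u_1)$. Your choice to work with the original functional instead of translating to $J(u)=\int_M|\nabla u|^2\,dv+\int_M R(2u-e^{2u})\,dv$ with $R=Ke^{2u_1}$ is legitimate in principle, but the translation is what makes the sign information $\int_M R\,dv=\alpha|M|<0$ available in the compactness proof, and that is the step your proposal is missing.
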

 This result will be proved by the mountain pass argument below.
There are some interesting works about prescribed sign-changing Gauss curvature on closed surfaces and related works on scalar curvature problems. We mention the interesting works such as the classical books \cite{A} and \cite{He}, the papers of M.Berger \cite{B}, Chang-Yang \cite{CY}, Kazdan-Warner \cite{KW1},  Chen-Li \cite{CL}, \cite{DL}, Borer-Luca-Struwe \cite{St}, Ma-Hong \cite{MH}, etc, and one may refer to Ma \cite{M} \cite{MW1} and Ma-Wei \cite{MW} for more related references.

The plan of this paper is below. In section \ref{sect1} we consider the method of super and sub solutions (the monotone method) to obtain a solution to \eqref{KW} in any dimensions. In section \ref{sect2} we consider the mountain pass solution to \eqref{KW} in dimension two. The key step is the verification of Palais-Smale condition to the related functional. In the last section we give a variational argument of Kazdan-Warner's result.

\section{the first solution via the direct method on convex sets}\label{sect1}
 In this section, we use the direct method on convex sets to get a solution to \eqref{KW}, our construction is slightly different from the one used in \eqref{KW} on a compact Riemannian manifold $(M,g)$. We have the following result and the new part in it is the local minima property, which will play a role in the mountain pass argument in next section.

 \begin{Thm} We assume that on a compact Riemannian manifold $(M,g)$ of any dimension,
the smooth function $K$ changes signs and $
 \bar{K}<0$.
Then
 for any $\alpha\in (\alpha_0,0)$, there is a solution to \eqref{KW}, which is a local minimizer of the functional
 $$
I(u)=\int_M(|\nabla u|^2+\alpha u)dv-\int_M K e^{2u}dv
$$
on $H^1(M)$.
 \end{Thm}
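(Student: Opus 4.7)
The proof decomposes into four steps that the phrase \emph{direct method on convex sets} already suggests.

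First, from Kazdan and Warner's sub/super-solution construction (the very fact certifying $\alpha_0>-\infty$ when $K$ changes signs with $\bar K<0$) I would extract, for each $\alpha\in(\alpha_0,0)$, smooth functions $\underline u\le \bar u$ which are respectively a sub- and a super-solution of \eqref{KW}, arranged with a strictly positive gap $\bar u-\underline u\ge c>0$. Set
$$C := \{u\in H^1(M) : \underline u\le u\le \bar u \text{ a.e.\ on } M\}.$$
On this closed convex set, the pointwise $L^\infty$ bound controls both $\alpha\int_M u\,dv$ and $\int_M Ke^{2u}\,dv$, so $I$ is coercive on $C$ purely through the Dirichlet term. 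Weak lower semicontinuity reduces to verifying weak continuity of $u\mapsto\int_M Ke^{2u}\,dv$ on $C$, which follows from dominated convergence (majorant $|K|e^{2\bar u}$) along an a.e.-convergent subsequence. Direct minimization yields $u_0\in C$ with $I(u_0)=\inf_C I$.

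Second, I would promote $u_0$ to a classical solution of \eqref{KW}. The constrained problem produces only a variational inequality; to recover the Euler--Lagrange equation one tests against the admissible variation $v_t=\max\{\underline u,\min\{\bar u,u_0+t\phi\}\}\in C$ for $\phi\in C^\infty(M)$ and small $t>0$. Splitting $M$ into the regions where $u_0+t\phi$ lies below $\underline u$, inside $[\underline u,\bar u]$, or above $\bar u$, the sub-/super-solution inequalities kill the contact contributions; dividing by $t$, sending $t\to 0^+$, and repeating with $-\phi$ yields $\langle I'(u_0),\phi\rangle=0$ for all $\phi$. Elliptic regularity then promotes $u_0$ to a smooth solution of \eqref{KW}.

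Third, I would establish the local-minimizer property, which is the novelty highlighted in the statement. The strong maximum principle applied to $u_0-\underline u$ and $\bar u-u_0$ (each satisfying an elliptic inequality with bounded zero-order term from the mean-value remainder of $Ke^{2\cdot}$) strengthens the pointwise bounds to $\underline u<u_0<\bar u$ with some uniform margin $\delta>0$. Thus $\|u-u_0\|_{L^\infty}<\delta$ forces $u\in C$ and hence $I(u)\ge I(u_0)$, so $u_0$ is an $L^\infty$-local minimizer. To upgrade to $H^1$-local minimality in dimension two (where Moser--Trudinger makes $I$ well-defined on all of $H^1$), I would apply a Brezis--Nirenberg-type truncation argument: any hypothetical sequence $u_n\to u_0$ in $H^1$ with $I(u_n)<I(u_0)$ is replaced by its truncation $\tilde u_n:=\max\{\underline u,\min\{\bar u,u_n\}\}\in C$; one checks $I(\tilde u_n)\le I(u_n)+o(1)$ by integrating the sub-/super-solution inequalities against the excursions $(u_n-\bar u)_+$ and $(\underline u-u_n)_+$, observes $\tilde u_n\to u_0$ in $H^1$, and reaches a contradiction with the $L^\infty$-local minimality.

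The step I expect to be the main obstacle is the last one: controlling the $I$-cost of the truncation requires balancing the nonlinear exponential term (where Moser--Trudinger is essential in dimension two) against the Dirichlet and linear terms on the excursion sets $\{u_n<\underline u\}\cup\{u_n>\bar u\}$, a somewhat delicate sign bookkeeping that hinges on the precise form of the sub-/super-solution inequalities. The three earlier steps follow well-established lines.
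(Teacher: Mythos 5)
Your overall strategy coincides with the paper's: the paper likewise takes the Kazdan--Warner solution at some $\alpha_1\in(\alpha_0,\alpha)$ as a strict super solution, a very negative constant as sub solution, minimizes $I$ over the order interval, and then simply cites Chang's book for the fact that this constrained minimizer is an $H^1(M)$-local minimizer; your steps 1--3 open up that black box, and steps 1 and 2 are sound. (Two side remarks: for critical points of $I$ to solve \eqref{KW} the linear term should be $2\alpha u$, as in the functional $F$ of the last section; and the $H^1$-local-minimizer assertion really only makes sense in dimension two, where $e^{2u}\in L^1$ for $u\in H^1$ --- your restriction of the upgrade to dimension two is appropriate, while the paper glosses over this.)

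There is, however, a genuine gap in step 3 exactly at the point you flag. An estimate of the form $I(\tilde u_n)\le I(u_n)+o(1)$ cannot yield a contradiction: the hypothesis is only $I(u_n)<I(u_0)$ with no quantitative gap, so combining $I(u_0)\le I(\tilde u_n)$ (which holds because $\tilde u_n\in C$ and $u_0$ minimizes over $C$ --- note the contradiction must come from minimality over $C$, not from $L^\infty$-local minimality, since $\tilde u_n$ need not be $L^\infty$-close to $u_0$) with $I(\tilde u_n)\le I(u_n)+o(1)$ gives only $0\le o(1)$. What is needed is the exact inequality $I(\tilde u_n)\le I(u_n)$ for all large $n$. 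Writing $u_n=\tilde u_n+v_n-w_n$ with $v_n=(u_n-\bar u)^+$ and $w_n=(\underline u-u_n)^+$, the difference $I(u_n)-I(\tilde u_n)$ is the Dirichlet excess $\int_M(|\nabla v_n|^2+|\nabla w_n|^2)dv$, plus terms with a favorable sign coming from the strict sub-/super-solution inequalities tested against $v_n,w_n$, plus remainders such as $\int_{\{u_n>\bar u\}}Ke^{2\bar u}\left(e^{2v_n}-1-2v_n\right)dv$, which have no pointwise sign where $K>0$. These remainders must be absorbed into the Dirichlet excess, i.e.\ shown to be $o(1)\bigl(\|\nabla v_n\|_{L^2}^2+\|\nabla w_n\|_{L^2}^2\bigr)$ rather than merely $o(1)$: this uses that the excursion sets have measure tending to zero (which requires the uniform margin $\underline u+\delta\le u_0\le\bar u-\delta$ and $u_n\to u_0$ in $H^1$), a Sobolev--Poincar\'e bound for $v_n,w_n$ supported on small sets, and Moser--Trudinger to control $\|e^{2v_n}\|_{L^2}$. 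Finally, make sure the super solution is strict (take the Kazdan--Warner solution at $\alpha_1\in(\alpha_0,\alpha)$, as the paper does): with a non-strict super solution the constrained minimizer could coincide with the obstacle, in which case both your strong-maximum-principle margin and the local-minimizer conclusion can fail.
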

The proof may be outlined below. Let $u_+=u_0$ be the solution obtained by Kazdan-Warner \cite{KW} for some $\alpha_1$, $\alpha_0<\alpha_1<\alpha$.
Then for any $\alpha\in (\alpha_1,0)$,
$$
 -\Delta u_++\alpha-K(x) e^{2u_+}> -\Delta u_++\alpha_1-K(x) e^{2u_+}=0,
$$
 i.e., $u_+$ is the super solution to \eqref{KW} for $\alpha\in (\alpha_1,0)$.

 To get a solution by the method of super and sub solutions, we need to find a sub solution $u_-<u_+$ to (\ref{KW}). We do this below.

Recall that $\alpha<0$. Note that for any real number $c$ very negative and $v_c=c$, we have
$$
\Delta v_c-\alpha+Ke^{v_c}=-\alpha+Ke^c>0, \ \ in \ M.
$$
Then $u_-=c (<u_+)$ is a sub solution to \eqref{KW}. Then we have a solution $u_1$ to \eqref{KW} in the interval $[c,u_+]$, which is a local minimizer of the functional
$$
I(u)=\int_M(|\nabla u|^2+\alpha u)dv-\int_M K e^{2u}dv
$$
on $H^1(M)$. We refer to \cite{C} for related references.

\section{the second solution: the mountain pass}\label{sect2}

In this section we consider the equation \eqref{KW} on the closed surface $(M,g)$ with $\chi (M)=\frac{1}{2\pi} \int_M kdv$, where $k$ is the Gauss curvature of $g$.  Note that Theorem \ref{thm2} below implies Theorem \ref{mali2}.

 \begin{Thm}\label{thm2} On the 2-dimensional compact Riemannian manifold $(M,g)$ with $\chi(M)<0$, there is a mountain-pass solution to \eqref{KW} provided
the given smooth function $K$ such that $K>0$ somewhere and $
 \bar{K}<0$.
 \end{Thm}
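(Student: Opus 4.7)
The plan is to apply the classical mountain-pass theorem of Ambrosetti-Rabinowitz to the functional $I$ on $H^1(M)$, taking as base point the local minimizer $u_1$ produced in Section \ref{sect1}. Three ingredients are needed: mountain-pass geometry based at $u_1$, the Palais-Smale (PS) condition at the mountain-pass level, and the observation that the critical point so produced must differ from $u_1$.

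For the geometry I would exploit the sign-changing hypothesis on $K$. Since $K>0$ on an open set $U$, pick a smooth $\phi\geq 0$, $\phi\not\equiv 0$, with $\mathrm{supp}\,\phi\subset U$. Along the ray $u_t:=u_1+t\phi$ one computes
$$
I(u_t) = I(u_1) + 2t\!\int_M\! \nabla u_1\cdot\nabla \phi\,dv + \alpha t\!\int_M\! \phi\,dv + t^2\!\int_M\! |\nabla \phi|^2 dv - \int_M K e^{2u_1}\bigl(e^{2t\phi}-1\bigr)dv,
$$
and the last term dominates because $Ke^{2u_1}\phi\geq 0$ is strictly positive on a set of positive measure, so $I(u_t)\to -\infty$ as $t\to +\infty$. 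Choosing $t_0$ large, $v:=u_{t_0}$ satisfies $I(v)<I(u_1)$ and lies outside any prescribed $H^1$-neighborhood of $u_1$. Setting
$$
c \,:=\, \inf_{\gamma\in\Gamma}\,\max_{s\in[0,1]} I(\gamma(s)),\qquad \Gamma:=\bigl\{\gamma\in C([0,1],H^1(M)) : \gamma(0)=u_1,\ \gamma(1)=v\bigr\},
$$
the local-minimum property of $u_1$ yields $c>I(u_1)$.

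The main obstacle is the Palais-Smale condition at level $c$. For a sequence $(u_n)$ with $I(u_n)\to c$ and $I'(u_n)\to 0$ in $H^{-1}(M)$, testing $I'(u_n)$ against the constant $1$ gives a uniform bound on $\int_M Ke^{2u_n}\,dv$, and combined with the energy level this produces
$$
\|\nabla u_n\|_2^2 + \alpha|M|\bar u_n = c + O(1),
$$
where $\bar u_n$ is the average of $u_n$. Since $\alpha<0$, this already bounds $\bar u_n$ from below. The delicate point is the upper bound: if $\bar u_n\to +\infty$ then $\|\nabla u_n\|_2^2 \sim -\alpha|M|\bar u_n \to +\infty$ while the identity $e^{2\bar u_n}\!\int_M Ke^{2\tilde u_n}\,dv = O(1)$ (with $\tilde u_n:=u_n-\bar u_n$) would force $\int_M Ke^{2\tilde u_n}dv\to 0$. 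I would argue by contradiction, using the Moser-Trudinger-Fontana inequality
$$
\log\int_M e^{2\tilde u_n}\,dv \;\leq\; \frac{\|\nabla \tilde u_n\|_2^2}{4\pi} + C,
$$
the sign hypothesis $\bar K<0$, and a Brezis-Merle-type concentration analysis to rule out every admissible blow-up profile for $e^{2u_n}$. The hypothesis $\chi(M)<0$ enters through the Gauss-Bonnet identity $\int_M k\,dv = 2\pi\chi(M)<0$, providing the sign information needed to compare the mountain-pass level with the Moser-Trudinger threshold, and thereby confining $(u_n)$ to the sub-threshold regime where the nonlinear term $e^{2u_n}$ is uniformly bounded in $L^p$ for some $p>1$.

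Once $\bar u_n$ and $\|\nabla u_n\|_2$ are controlled, weak $H^1$-convergence together with the $L^p$-compactness of $e^{2u_n}$ yields strong $H^1$-convergence of a subsequence to a critical point $u_2$ at level $c>I(u_1)$, so $u_2\neq u_1$; elliptic regularity then upgrades $u_2$ to a classical solution of \eqref{KW}. The entire argument hinges on the sub-threshold PS analysis in the regime $\bar u_n\to +\infty$, which is the one genuinely delicate estimate.
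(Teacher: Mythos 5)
Your overall skeleton matches the paper's: base the mountain pass at the local minimizer $u_1$, get the geometry from the set where $K>0$, control $\int_M Ke^{2u_n}dv$ by testing $I'(u_n)$ against the constant $1$, and observe that the only dangerous regime is $\bar u_n\to+\infty$ (the case $\bar u_n\to-\infty$ is excluded by $\alpha<0$, exactly as in the paper, where the shifted functional $J(u)=\int_M|\nabla u|^2+\int_M R(2u-e^{2u})$ with $R=Ke^{2u_1}$, $\int_M R=\alpha|M|<0$, is used). But precisely at the step you yourself identify as "the one genuinely delicate estimate" you do not give an argument: you defer to a Moser--Trudinger--Fontana inequality plus an unspecified Brezis--Merle-type blow-up classification, together with the claim that $\chi(M)<0$ via Gauss--Bonnet puts the mountain-pass level below a concentration threshold. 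Nothing in your setup gives an a priori comparison of $c$ with any Moser--Trudinger threshold, and for Liouville-type functionals with sign-changing weight Palais--Smale compactness does not follow from such soft considerations; indeed the paper never uses Moser--Trudinger, Gauss--Bonnet, or blow-up analysis at all. The missing idea is elementary and local: on the nonempty open set $M_-=\{R<0\}$ (equivalently $\{K<0\}$, nonempty since $\bar K<0$), test $J'(u_k)$ with $\phi=u_k^+\eta^2$ for a cut-off $\eta$; because $-R\geq\delta>0$ there and $e^{2t}\geq t^3$, the exponential term has a favorable sign and yields a uniform bound on $\int\eta^2|\nabla u_k^+|^2+\delta\int\eta^2(u_k^+)^4$. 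This local bound is what kills the regime $\bar u_n\to+\infty$: normalizing $v_k=u_k/|u_k|_2$, the gradients vanish in the limit, so $v_k\to$ a nonzero constant, which would have to be positive if $\bar u_n\to+\infty$, contradicting $v^+=0$ on $M_-$. Without this (or some substitute with the same quantitative content), your PS verification is a statement of intent, not a proof.

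A secondary gap: you assert that the local-minimum property of $u_1$ "yields $c>I(u_1)$". If the local minimum is not strict this is false in general; one only gets $c\geq I(u_1)$, and to conclude the existence of a critical point distinct from $u_1$ at level $c$ one needs a refined mountain-pass statement for the degenerate case (this is why the paper invokes K.~C.~Chang's result rather than the plain Ambrosetti--Rabinowitz theorem). This is fixable by citing such a refinement, but as written the claim is unjustified.
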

We now give the idea to prove this result. We recall the functional $$
I(u)=\int_M(|\nabla u|^2+\alpha u)dv-\int_M K e^{2u}dv
$$
on $H^1(M)$. Note that the solution $u_1$ obtained by the method of super and sub solutions can also be described as the minimizer of the functional $I$ on
$[u_-,u_+]\bigcap H^1(M)$. Hence, for any $\phi\in H^1(M)$, $<I''(u_1)\phi, \phi>\geq 0$, i.e.,
$$
\int_M(|\nabla \phi|^2-2K e^{2u}\phi^2)dv\geq 0.
$$
Recall that $u_1$ satisfies
$$
-\Delta u_1+\alpha=Ke^{2u_1}, \ \ in \ M,
$$
and then
$$
\int_MKe^{2u_1}dv=\alpha |M|.
$$
We give a remark here. If $u_1$ is a strict minimizer, then we have some uniform constant $c>0$ such that
$$
\int_M(|\nabla \phi|^2-2K e^{2u_1}\phi^2)dv\geq c\int_M\phi^2dv, \ \ \forall \ \phi\in H^1(M).
$$
Otherwise, we have a positive solution $\phi$ to the linear equation
$$
-\Delta\phi-2K e^{2u_1}\phi=0,\ \ in \ M.
$$
We want to find another solution of the form $u_1+u$ such that $u\not=0$ satisfies
$$
-\Delta (u_1+u)+\alpha=Ke^{2u_1+2u}, \ \ in \ M.
$$
Then we have hat
$$
-\Delta u=Ke^{2u_1}(e^{2u}-1), \ \ in \ M.
$$
So we look for a non-trivial critical point of the functional
$$
J(u)=\int_M|\nabla u|^2dv+\int_M R(x)(2u-e^{2u})dv
$$
on $H^1(M)$, where we have set $R(x)=Ke^{2u_1}$. Note that $\int_M Ke^{2u_1}=\alpha |M|$.
Recall that $u=0$ is a local minimizer of $J$ on $H^1(M)$.

For small $\epsilon>0$, we let $M_\epsilon=\{R(x)\geq \epsilon\}$ and $M_{-\epsilon}=\{-R(x)\geq \epsilon\}$. As above, we denote by $\bar{u}=\frac{1}{vol(M)}\int_Mu dv$.
We now choose a smooth function $w_0\in C_0^1(M_{\epsilon}\bigcup M_{-\epsilon})$ which is positive in $\{R(x)\geq \epsilon\}$ for some $\epsilon>0$ and $\bar{w}_0=0$. Note that
as $t\to \infty$,
 $$
 J(tw_0)=t^2\int_M |\nabla w_0|^2dv +\int_M R(x)(2tw_0- e^{2tw_0})dv\to -\infty
 $$
 and we may choose $t_0>0$ large such that $J(t_0w_0)< J(0)=0$.

Define
$$
X= H^1(M)
$$
and
 $$
 c=\inf_{\gamma\in \Gamma} \sup_{t\in [0,1]} J(\gamma(t)),
 $$
 where
 $$
 \Gamma=\{\gamma\in C([0,1],X); \gamma(0)=0, \gamma(1)=t_0w_0\}.
 $$
 Note that $c\geq 0$. We shall verify that $J$ satisfies the Palais-Smale condition on $X$. Then, there is a "mountain pass" critical point of $J$ on $X$ satisfies
$$
-\Delta u-R(x)e^{2u}+R(x)=0,
$$
weakly in $H^1(M)$.
By the result of K.C.Chang \cite{C}, $c$ is a mountain pass critical value of the function $J$ and may be obtained by the function $w\not=0$, which gives a solution, which is different from the solution $u_1$ obtained by the monotone method above. Thus as we have noted before, we get the proof of Theorem \ref{thm2}.

The main topic now is to check the Palais-Smale condition for the functional $J$ on $H^1(M)$.
Set $M_-=\{x\in M; R(x)<0\}$, which is a non-empty open set in $M$. We have the following compactness result for the functional $J$ on $H^1(M)$.

 \begin{Lem}\label{PS} The functional $J$ satisfies the Palais-Smale condition at the level $c\geq 0$ in the function space $X=H^1(M) $. That is, if any sequence
$\{u_k\}\subset X$ satisfies $J(u_k)\to c$ and $J'(u_k)\to 0$ in the dual space $X^*$, then there is a subsequence of $\{u_k\}$ converges in $X$.
 \end{Lem}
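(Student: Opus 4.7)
The plan is the standard one for verifying a Palais--Smale condition on a variational problem of Kazdan--Warner type: first establish $H^1$-boundedness of an arbitrary PS sequence $\{u_k\}\subset H^1(M)$, then extract a weakly convergent subsequence and upgrade to strong convergence using the Moser--Trudinger inequality. The bulk of the work lies in the first step.

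For the boundedness, decompose $u_k=\bar u_k+v_k$ with $\int_M v_k\,dv=0$. Testing $J'(u_k)\to 0$ against the constant $1\in H^1(M)$, and using $\int_M R\,dv=\int_M Ke^{2u_1}\,dv=\alpha|M|$, yields
$$
e^{2\bar u_k}\int_M R\,e^{2v_k}\,dv=\alpha|M|+o(1). \qquad(\ast)
$$
Because $\alpha|M|<0$, this forces $\int_M Re^{2v_k}\,dv<0$ for large $k$. Substituting $(\ast)$ into $J(u_k)\to c$ produces the balance
$$
\|\nabla v_k\|_2^2+(2\bar u_k-1)\alpha|M|+2\int_M R\,v_k\,dv=c+o(1). \qquad(\ast\ast)
$$
The Moser--Trudinger inequality for mean-zero functions on a closed surface, $\log\int_M e^{2v}\,dv\leq \|\nabla v\|_2^2/(8\pi)+C$, together with $(\ast)$ and $|\int R\,e^{2v_k}\,dv|\leq \|R\|_\infty\int_M e^{2v_k}\,dv$, delivers the lower bound $\bar u_k\geq -\|\nabla v_k\|_2^2/(16\pi)-C$. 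Plugging this into $(\ast\ast)$ and estimating $|\int_M R\,v_k\,dv|\leq C\|\nabla v_k\|_2$ by Poincar\'e (valid since $\bar v_k=0$), one obtains a coercive quadratic estimate whose positive leading coefficient comes from $-\alpha|M|>0$; this forces $\|\nabla v_k\|_2$ to be bounded, hence $\bar u_k$ to be bounded from below. The upper bound on $\bar u_k$ then follows by reading $(\ast\ast)$ with $\|\nabla v_k\|_2$ now known to be controlled.

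Once $\{u_k\}$ is bounded in $H^1(M)$, pass to a weakly convergent subsequence $u_k\rightharpoonup u$. Moser--Trudinger gives $e^{2u_k}$ uniformly bounded in every $L^p(M)$, $p<\infty$, and Rellich--Kondrachov yields $u_k\to u$ strongly in $L^p(M)$; consequently $R(1-e^{2u_k})\to R(1-e^{2u})$ in $L^p(M)$. Testing $J'(u_k)\to 0$ against $u_k-u\in H^1(M)$, and using weak convergence of the gradient to see that $\int_M\nabla u\cdot\nabla(u_k-u)\,dv\to 0$, we conclude $\|\nabla(u_k-u)\|_2^2\to 0$, and therefore $u_k\to u$ in $H^1(M)$.

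The main obstacle is the boundedness step, specifically ruling out $\bar u_k\to+\infty$: in that scenario $(\ast)$ forces the delicate near-cancellation $\int_M R_+ e^{2v_k}\,dv\approx \int_M R_- e^{2v_k}\,dv$, while $(\ast\ast)$ simultaneously forces $\|\nabla v_k\|_2^2\sim 2|\alpha||M|\,\bar u_k\to\infty$. The sharp constant $1/(8\pi)$ in Moser--Trudinger, together with the negative-average condition $\alpha<0$ (i.e.\ $\bar K<0$), is precisely what precludes such a concentration profile; making this quantitative is the crux of the argument.
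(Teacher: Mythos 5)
Your reduction is set up correctly (testing $J'(u_k)$ with $\phi=1$ to get $\int_M Re^{2u_k}\,dv\to\alpha|M|<0$, the identity $(\ast\ast)$, and the final step upgrading weak to strong convergence via $e^{2u_k}\to e^{2u}$ in $L^p$ and testing with $u_k-u$ — this last part coincides with the paper's). But there is a genuine gap at the central step, the $H^1$-boundedness, and your own closing paragraph concedes it. The Moser--Trudinger bound gives only a \emph{lower} bound $\bar u_k\geq -\|\nabla v_k\|_2^2/(16\pi)-C$, while $(\ast\ast)$, with $a:=-\alpha|M|>0$, reads $\|\nabla v_k\|_2^2-2a\bar u_k+2\int_M Rv_k\,dv=c-a+o(1)$; substituting the lower bound into this identity produces an inequality of the form $\bigl(1+\tfrac{a}{8\pi}\bigr)\|\nabla v_k\|_2^2\geq -C\|\nabla v_k\|_2-C$, which is vacuous. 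There is no ``coercive quadratic estimate'': the dangerous scenario $\bar u_k\to+\infty$ with $\|\nabla v_k\|_2^2\sim 2a\bar u_k$ and $\int_M Re^{2v_k}\,dv\to 0^-$ is perfectly compatible with both $(\ast)$ and $(\ast\ast)$ and with Moser--Trudinger, since $(\ast)$ only yields an \emph{upper} bound on $\bar u_k$ if one has a lower bound on $-\int_M Re^{2v_k}\,dv$, which is exactly what may degenerate. So the step ``this forces $\|\nabla v_k\|_2$ to be bounded'' is unjustified, and deferring it as ``making this quantitative is the crux'' means the proof of the lemma is not actually given.

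The paper closes this gap by a different mechanism, not by the sharp Moser--Trudinger constant. It first rules out $\bar u_k\to-\infty$ by the elementary estimates \eqref{KW2}--\eqref{KW3} (since $d=\int_M R\,dv<0$, the term $2c_kd$ would blow up to $+\infty$ against a quantity bounded below). To control the opposite direction it derives a \emph{local} a priori bound for $u_k^+$ on the set $M_-=\{R<0\}$: testing \eqref{PS2} with $\phi=u_k^+\eta^2$, where $\eta$ is a cutoff supported where $-R\geq\delta>0$, the term $-\int_M Re^{2u_k}u_k^+\eta^2\,dv$ is coercive (using $e^{2t}\geq t^3$) and yields a uniform bound on $\int_M\eta^2(u_k^+)^4\,dv$ and on $\|\nabla(\eta u_k^+)\|_2$. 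Then a normalization argument ($v_k=u_k/|u_k|_2$ converging to a nonzero constant, which must vanish on $M_-$ by the local bound) gives the contradiction. If you want to complete your route, you need some substitute for this localized control of $u_k^+$ on the region where $R<0$; the soft combination of $(\ast)$, $(\ast\ast)$ and Moser--Trudinger alone does not suffice.
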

 \begin{proof}
 Assume that the sequence $\{u_k\}\subset X$ satisfies $J(u_k)\to c$ and $J'(u_k)\to 0$ in the dual space $X^*$. That is,
 \begin{equation}\label{PS1}
 \int_M (|\nabla u_k|^2+R(x)(2u_k-e^{2u_k}))dv\to c
 \end{equation}
and
\begin{equation}\label{PS2}
\int_M(\nabla u_k\cdot \nabla \phi +R(x)\phi-R(x)e^{2u_k}\phi )dv=\circ(\|\phi\|), \ \ \phi\in X
\end{equation}
where $\|\cdot||$ is the norm on $X$.
Set $\phi=1$ in \eqref{PS2}, we have
\begin{equation}\label{key}
\int_MR(x)(1-e^{2u_k})dv\to 0, \ i.e., \ \int_MR(x)e^{2u_k}dv\to \int_MR(x)dv=d<0.
\end{equation}
By \eqref{PS1}, we have
\begin{equation}\label{PS3}
 \int_M (|\nabla u_k|^2+R(x)(2u_k))dv\to c+d.
\end{equation}
Let
$$
\bar{u}_k=\frac{1}{vol(M)}\int_Mu_kdv:=c_k.
$$
Then we have
$$
 \int_M (|\nabla u_k|^2+2R(x)(u_k-c_k))dv+2c_kd\to c+d.
$$
Note that
\begin{equation}\label{KW2}
|\int_MR(x)(u_k-c_k))dv|\leq |R|_\infty \int_M|u_k-c_k|dv\leq \frac12  \int_M |\nabla u_k|^2+C_1
\end{equation}
for some $C_1$. Then
\begin{equation}\label{KW3}
 \int_M (|\nabla u_k|^2+2R(x)(u_k-c_k))dv+2c_kd\geq -C_1+2c_kd.
\end{equation}
Let $u_k^+(x)=\sup(u_k(x),0)$. We want to show that the sequence $\{u_k^+\}$ is locally bounded in $H^1_{loc}(M_-)$. Take any non-empty domain $D$ in $M_-$ with $dist(D,\partial M_-):=d>0$ and $-R(x)\geq \delta>0$ on $D$. We show that there is a constant $C(D)$ such that $\| u_k^+\|\leq d(D)$. Take any $p\in M_-$ such that $B_d:=B_d(p)\subset M_-$, where $B_r(p)$ is the geodesic ball centered at $p$ with radius $r>0$. Choose the cut-off function $\eta\in C_0^1(B_{d/2})$ such that $\eta=1$ on $B_{d/4}$ and $|\nabla \eta|^2/\eta\leq C$ for some uniform constant $C>0$. Set $\phi=u_k^+\eta^2$ in \eqref{PS2}. Then we have some uniform $C>0$ such that
$$
\int_M(\nabla u_k\cdot \nabla (u_k^+\eta^2)+R(x)u_k^+\eta^2 -R(x)e^{2u_k}u_k^+\eta^2 )dv\leq C(\|u_k^+\eta^2\|).
$$
Since
$$
\int_M\nabla u_k\cdot \nabla (u_k^+\eta^2) =\int_M|\nabla (\eta u_k^+)|^2-\int_M (u_k^+)^2|\nabla \eta|^2,
$$
we then have 
$$
\int_M(|\nabla (\eta u_k^+)|^2-R(x)e^{2u_k}u_k^+\eta^2 )dv\leq C\int_M |\nabla \eta|^2 (u_k^+)^2-\int_MR(x)u_k^+\eta^2+C(\|u_k^+\eta\|).
$$
By
$e^{2t}\geq t^3$ for any real $t$ and $|\nabla \eta|^2\leq C\eta$, we have,
$$
\int_M(|\nabla (\eta u_k^+)|^2+\delta \eta^2(u_k^+)^4)dv\leq C\int_M \eta (u_k^+)^2-\int_MR(x)u_k^+\eta^2+C(\|u_k^+\eta\|).
$$

Using the Holder inequality we then have
$$
\int_M(|\nabla (\eta u_k^+)|^2+\frac12\delta \eta^2(u_k^+)^4)dv\leq C(\delta)+C(\|u_k^+\eta\|),
$$
which implies that there is a uniform constant $C:=C(d)$ such that
$$
\int_M(|\nabla (\eta u_k^+)|^2+\frac14\delta \eta^2(u_k^+)^4)dv\leq C.
$$
We now show that $\int_Mu_k^2$ is uniformly bounded (and is equivalent to $c_k$ being a bounded sequence).  If this is true, then the Palais-Smale sequence is bounded in $X$. Then we may assume that there is a subsequence, still denoted by $u_k$, which weakly converges to $u$ in $X$. Then the subsequence converges in $X$ by \eqref{PS2} and the fact that for any $p>1$,
  $$
  e^{2u_k}\to  e^{2u}, \ \ in \ L^p(M).
  $$

 To show $|u_k|_2^2:=\int_Mu_k^2$ being uniformly bounded, we argue by contradiction and assume $|c_k|\to\infty$. Let
 $$v_k=u_k/|u_k|_2=(w_k+c_k)/\sqrt{|w_k|_2^2+c_k^2Vol(M)}.$$
  Then by \eqref{PS3} we have
 $$
 \int_M |\nabla v_k|^2dv\to 0.
 $$
 We may assume that
 $v_k$ converges to $v$  strongly in $L^2(M)$ and weakly in $X$ with  $|v|_2=1$. Then $\int_M |\nabla v|^2dv=0$ and $v=\beta$ is a constant. Since $u_k^+$ is locally bounded in $H^1_{loc}(M_-)$, we have $v_+=0$ in $M_-$ and then $\beta<0$ since $|v|_2=1$ and $c_k\to -\infty$. However, this is impossible by \eqref{KW2} and \eqref{KW3}. We may let $u$ be the weak limit of $u_k$ in $H^1(M)$.
 Choosing $\phi=u_k-u$, this then  shows that $(u_k)$ is a convergent PS sequence of $J$.
\end{proof}
  Once we have the compactness result for the functional $J$ as above, the proof of Theorem \ref{thm2} is complete.

\section{Local minimizer for $K\leq 0$}

As we mentioned before, using a very beautiful argument, Kazdan and Warner \cite{KW} have showed that $\alpha_0=\infty$ if and only if $K$ is nontrivial non-positive function on $M$.
 The question now is if one can use the variation method to get a solution in the case treated by Kazdan-Warner. Using the variational method on convex set we answer this question affirmatively.
Precisely, we prove the following result by the variational method on convex set.
 \begin{Thm}\label{mali1} On the any dimensional compact Riemannian manifold $(M,g)$, for $\alpha\in (-\infty,0)$ and for any non-trivial smooth function $K\leq 0$on $M$, there is a solution to \eqref{KW}, which is a local minimizer of the functional defined by
 $$
F(u)=\int_M(|\nabla u|^2+2\alpha u)dv-\int_MK(x) e^{2u}dv, \ u\in H^1(M).
$$
 \end{Thm}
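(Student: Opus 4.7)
The plan is to mirror the convex-set variational argument used in Section~\ref{sect1}, exploiting the fact that in the present regime ($K\le 0$ non-trivial, $\alpha<0$), Kazdan--Warner \cite{KW} guarantee $\alpha_0=-\infty$. First, pick any $\alpha_1$ with $\alpha_1<\alpha<0$ and invoke \cite{KW} to obtain a smooth $u_+$ solving $-\Delta u_++\alpha_1=K(x)e^{2u_+}$. Then
\begin{equation*}
-\Delta u_++\alpha-K(x)e^{2u_+}=\alpha-\alpha_1>0,
\end{equation*}
so $u_+$ is a strict super-solution for \eqref{KW} at $\alpha$. For the sub-solution I would take the constant $u_-=c$ with $c$ so negative that $\|K\|_\infty e^{2c}\le|\alpha|$; since $K\le 0$, this yields $\alpha-K(x)e^{2c}\le 0$ on $M$, i.e., $u_-$ is a sub-solution. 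A further reduction of $c$ ensures $u_-<u_+$ pointwise.

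Next I would minimize $F$ on the closed convex set $\mathcal{C}=\{u\in H^1(M):u_-\le u\le u_+\text{ a.e.}\}$. Since every $u\in\mathcal{C}$ is uniformly bounded in $L^\infty$, both $\int_M 2\alpha u\,dv$ and $\int_M K\,e^{2u}\,dv$ are uniformly bounded on $\mathcal{C}$, so $F(u)\ge \int_M|\nabla u|^2\,dv-C_0$, which is coercive in $H^1$. For a minimizing sequence $\{u_n\}\subset\mathcal{C}$, weak compactness gives $u_n\rightharpoonup u_1$ in $H^1(M)$ with $u_1\in\mathcal{C}$. The gradient term is weakly l.s.c., the linear term is weakly continuous, and the uniform $L^\infty$ bound combined with Sobolev compactness yields $e^{2u_n}\to e^{2u_1}$ in every $L^p(M)$; hence $u_1$ realizes the minimum of $F$ on $\mathcal{C}$.

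To upgrade $u_1$ to a classical solution and an $H^1$-local minimizer, I would first establish the strict pointwise ordering $u_-<u_1<u_+$. Comparing the variational inequality satisfied by $u_1$ on $\mathcal{C}$ with the strict sub/super-solution properties, the strong maximum principle applied to $u_+-u_1$ and $u_1-u_-$ (the gap $\alpha-\alpha_1>0$ ensures the right sign) produces the strict ordering. Free test variations then yield the Euler--Lagrange equation $-\Delta u_1+\alpha=K(x)e^{2u_1}$, and elliptic regularity gives $u_1\in C^\infty(M)$. Since $u_1$ lies in the $L^\infty$-interior of $\mathcal{C}$, every $v\in H^1(M)$ sufficiently close to $u_1$ in $L^\infty$ lies in $\mathcal{C}$ and so $F(u_1)\le F(v)$; a Brezis--Nirenberg-type argument then lifts this $L^\infty$- (equivalently $C^1$-) local minimality to $H^1$-local minimality.

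The main obstacle is precisely this final upgrading step: establishing the strict ordering via the maximum principle (which is why one must keep the \emph{strictness} of both sub- and super-solutions, hence the choice $\alpha_1<\alpha$ and $\|K\|_\infty e^{2c}<|\alpha|$), and then lifting from $L^\infty$- to $H^1$-local minimality. One could sidestep the appeal to \cite{KW} by constructing $u_+$ explicitly from the Green's function of $-\Delta$ against $K-\bar K$ together with a large additive constant, but this trades one technicality for another; either route delivers a strict super-solution, after which the variational machinery above closes the proof.
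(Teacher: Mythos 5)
Your overall plan (minimize on an order interval between a sub- and a super-solution, then upgrade the constrained minimizer to a genuine solution and an $H^1$-local minimizer) is in the right family, but the upgrading step, which you yourself identify as the main obstacle, is left with a real gap. You propose to get the strict ordering $u_-<u_1<u_+$ by the strong maximum principle \emph{before} deriving the Euler--Lagrange equation, and then to conclude by ``free test variations.'' This is circular as written: the strong maximum principle needs a differential equation or inequality for $u_1$, and at that stage all you have is the variational inequality on the order interval, which does not by itself give one-sided inequalities (a perturbation $u_1+t\phi$ with $\phi\ge 0$ and $t$ of either sign may leave $\mathcal{C}$). Moreover, even granted a strict a.e.\ ordering, $H^1$ test functions are unbounded, so $u_1+\epsilon\phi$ need not stay in $\mathcal{C}$ for any $\epsilon>0$; free variations require a uniform gap and $L^\infty$-small perturbations, i.e.\ continuity of $u_1$, which presupposes the equation you are trying to derive. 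The standard repair is the truncation argument (test with $v=\min(u_+,\max(u_-,u_1+\epsilon\phi))$ and absorb the two obstacle terms using the sub- and super-solution inequalities), which is exactly the step the paper delegates to \cite{M2} (see also \cite{C}); your sketch omits it.

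The second weak point is the final lifting from $L^\infty$- (or $C^1$-) to $H^1$-local minimality by a Brezis--Nirenberg-type theorem: the statement is in arbitrary dimension and the nonlinearity $e^{2u}$ has exponential growth, far beyond the subcritical/critical growth those results require (for $n\ge 3$, $e^{2u}$ need not even be integrable for $u\in H^1$). What makes this case work --- and what the paper's proof actually rests on --- is that $K\le 0$ makes $u\mapsto -\int_M K e^{2u}dv$ convex, hence $F$ is convex on $H^1(M)$; once $u_1$ weakly solves \eqref{KW}, convexity gives $F(v)\ge F(u_1)$ for all $v$, so $u_1$ is a global, in particular local, minimizer in any dimension. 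You never use this convexity. Note also the difference of route: the paper uses no sub-solution at all, minimizing the convex functional on the one-sided convex set $\{u\le u_+\}$, with $u_+$ built explicitly by solving $-\Delta w=K-\bar{K}$ and adding constants, and with boundedness below coming from $\alpha<0$ together with the upper bound on $\bar{u}$ forced by the constraint. Your primary construction of $u_+$ by invoking \cite{KW} at some $\alpha_1<\alpha$ is logically admissible but defeats the purpose of the theorem, which is to re-derive the $K\le 0$ case of \cite{KW} variationally (in that case \cite{KW} already yields solvability at $\alpha$ itself); your fallback construction via the Green's function of $-\Delta$ against $K-\bar{K}$ is essentially the paper's and is the one you should make primary.
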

 This result is basically obtained in \cite{KW}, where they have used the monotone method. Here we prefer to give a variational proof for completenesswhich also complements Berger's program of application of variational methods to problems of prescribed non-positive Gauss curvature on closed surface, which are of the same type to \eqref{KW}. The variational method was used by C.Hong \cite{H} to study the problem (\ref{KW}) on the two-sphere.
 \begin{proof}
Recall that $K\leq 0$ on $M$ and $\bar{K}=\frac{1}{|M|}\int_M Kdv<0$.
It is well-known that by the direct method, we can solve the Poisson equation
$$
-\Delta w=K-\bar{K}, \ \ in  \ M
$$
to get the smooth solution $w$ with $w>0$ on $M$. We take $b>0$ and $b=e^r$ and let
$$
v=bw+r.
$$
Choose $b>0$ such that $-b\bar{K}+\alpha>0$.
Note that
$$
-K(e^{bw}-1)\geq 0.
$$
Then we have
\begin{align*}
-\Delta v+\alpha-Ke^v &=-b \Delta w-Ke^{bw+r} +\alpha  \\
  &=-b (\bar{K}-K)-bK e^{bw}+\alpha \\
  &=-b\bar{K}-bK( e^{bw}-1)+\alpha  \\
  &> 0.
\end{align*}
This implies that $u_+:=v$ is a super solution to (\ref{KW}).

Similar to \cite{B}, we
define
$$
F(u)=\int_M(|\nabla u|^2+2\alpha u)dv-\int_MK(x) e^{2u}dv
$$
on the convex set $H:= \{v\in H^1(M), v\leq u_+\}$, which is a convex functional on $H$ by the condition $K\leq 0$ on $M$. Here $v\leq u_+$ means that $v(x)\leq u_+(x)$ almost everywhere on $M$. Set $\mu=\inf_H F(u)$. We shall show that $\mu>-\infty$. Assume that $\mu=\infty$, and we shall have a sequence $\{u_k\}\subset H$ such that
$F(u_k)\to \mu=-\infty$. Note that
$$
F(u)=\int_M|\nabla u|^2dv-\int_MK(x) e^{2u}dv+2\alpha |M| \bar{u}.
$$
Then we must have
$$
\bar{u_k}\to \infty,
$$
which is impossible by the constraint condition
$$
u_k\leq u_+.
$$
Then $\mu>-\infty$ and $\bar{u_k}$ is uniformly bounded. Hence $\{u_k\}\subset H^1(M)$ is a bounded sequence in $H^1(M)$ and then we have
a weakly convergent subsequence, still denoted by $u_k$ with limit $u$. Hence
$
F(u)=\mu
$
and one can directly verify \cite{M2} that $u$ is a solution to \eqref{KW} on $M$ and a local minimizer of the functional $F$ \cite{M2}.  This completes the proof of the result.
\end{proof}

\end{document}